\newtheorem{lem}{\noindent {\bf Lemma}}[section]
\newtheorem{coro}{\noindent {\bf Corollary}}[section]
\newtheorem{thm}{\noindent {\bf Theorem}}[section]
\newcounter{remark}
\newenvironment{remark}{\smallskip\noindent {\bf Remark \arabic{section}.\arabic{remark}.}}
{\addtocounter{remark}{1}\par}
\newcounter{example}
\newenvironment{exa}{\smallskip\noindent {\bf Example \arabic{section}.\arabic{example}.}}
{\addtocounter{example}{1}\par}
\date{}
\newcounter{defi}\setcounter{defi}{1}
\newenvironment{defi}{
\smallskip \noindent
{\bf
  Definition \arabic{section}.\arabic{defi}.
}}{\addtocounter{defi}{1}\par}
\newcommand{\ZZ}{\mathbb Z}
\newcommand{\NN}{\mathbb N}
\newcommand{\RR}{\mathbb R}
\newcommand{\abs}[1]{\left|~#1~\right|}
\title{\bf Metric spaces with complexity of the smallest infinite ordinal number}
\author{\large  Jingming Zhu,~~~Yan Wu
\footnote{
College of Mathematics Physics and Information Engineering, Jiaxing University, Jiaxing , 314001, P.R.China.
 E-mail: yanwu@mail.zjxu.edu.cn;~~ 122411741@qq.com}}
\date{}
\begin{document}
\maketitle
\begin{center}
\begin{minipage}{0.9\textwidth}
\noindent{\bf Abstract.} In this paper, we are concerned with
 the study on metric spaces with complexity of the smallest infinite ordinal number. We
 give equivalent formulations of the definition of metric spaces with complexity of the smallest infinite ordinal number and prove that the exact complexity of the finite product $\ZZ \wr\ZZ\times\ZZ \wr\ZZ\times\cdots\times \ZZ \wr\ZZ$ of wreath product is $\omega$, where $\omega$ is the smallest infinite ordinal number. Consequently, we obtain that
the complexity of $(\ZZ \wr\ZZ)\wr\ZZ$ is $\omega+1$.

{\bf Keywords } Metric spaces, the exact complexity, the smallest infinite ordinal number, wreath product;

\end{minipage}
\end{center}
\footnote{
This research was supported by
the National Natural Science Foundation of China under Grant (No.11301224,11401256,11501249)

}
\begin{section}{Introduction}

Inspired by the property of finite asymptotic dimension of M.Gromov~(\cite{Gromov}),
a geometric concept of finite decomposition complexity was
 introduced by E.Guentner, R.Tessera and G.Yu. Roughly
speaking, a metric space has finite decomposition complexity when
there is an algorithm to decompose the space into nice pieces in
certain asymptotic way. It turned out that many groups have finite
decomposition complexity and these groups satisfy strong rigidity
properties including the stable Borel conjecture~(\cite{Yu2012},\cite{Yu2013}).  In \cite{Yu2013},
E.Guentner, R.Tessera and G.Yu show that the class of groups with
finite decomposition complexity includes all linear groups,
subgroups of almost connected Lie groups, hyperbolic groups and
elementary amenable groups and is closed under taking subgroups, extensions, free
amalgamated products, HNN-extensions and inductive limits.

Finite decomposition complexity is a large scale property of a metric space.
To make the
property quantitative, a countable ordinal ¡±the complexity¡± can be defined for a metric space
with finite decomposition complexity. There is a sequence of subgroups of Thompson's group F which is defined by induction as follows:
\[
G_{1}=\ZZ\wr\ZZ,~~G_{n+1}=G_{n}\wr\ZZ.
\]
We are concerned with the study of the exact complexity of $G_{n}$ which is partially inspired by the question of
the finite decomposition complexity of Thompson's group F(\cite{cannon1996},\cite{Yan2014},\cite{Yan2011}).
In fact, if the exact complexity of the sequence $\{G_{n}\}$ of subgroups of F is strictly increasing, then we can prove that Thompson's group F does not
have finite decomposition complexity.
In \cite{Yan2011}, we proved that the complexity of $G_{n}$ is $\omega n$ and the exact complexity of $\ZZ\wr\ZZ$ is $\omega$, where $\omega$ is the smallest infinite ordinal number, but it is still unknown about the exact complexity of $G_{n}$ when $n>1$.
Here we prove that the exact complexity of the finite product $\ZZ \wr\ZZ\times\ZZ \wr\ZZ\times\cdots\times \ZZ \wr\ZZ$ of wreath product is $\omega$.  Consequently, we obtain that
the complexity of $(\ZZ \wr\ZZ)\wr\ZZ$ is $\omega+1$.

There is no group of examples known which make a difference between the exact complexity of $\omega$ and the exact complexity of $\alpha$, where $\alpha$ is a countable ordinal greater than $\omega$. So the question arises naturally: Is there any metric space with the exact complexity greater than $\omega$?
Here we give equivalent descriptions of the definition of metric spaces with complexity of $\omega$.

\end{section}

\begin{section}{Equivalent descriptions of metric spaces with complexity of $\omega$}\

We begin by recalling some elementary concepts from coarse geometry.

Let~$(X, d)$ be a metric space. For $U,V\subseteq X$, let
\[
\text{diam}~ U=\text{sup}\{d(x,y): x,y\in U\}
\]
and
\[
d(U,V)=\text{inf}\{d(x,y): x\in U,y\in V\}.
\]

A family $\mathcal{U}$ of subsets of $X$ is said to be \emph{uniformly bounded} if
$
diam~\mathcal{U}\stackrel{\bigtriangleup}{=}\text{sup}\{\text{diam}~ U: U\in \mathcal{U}\}
$ is finite.

A family $\mathcal{U}$ of subsets of $X$ is said to be\emph{ $r$-disjoint} if
\[
d(U,V)\geq r~~~~~\text{for every}~ U\neq V\in \mathcal{U}.
\]

\begin{defi}(\cite{Bell2011})
A metric space $X$ has \emph{finite asymptotic dimension} if
there is a $n\in\NN$, such that for every $r>0$,
there exists a sequence of uniformly bounded families
$\{\mathcal{U}_{i}\}_{i=1}^{n}$ of subsets of $X$
such that the union
$\bigcup_{i=1}^{n}\mathcal{U}_{i}$ covers $X$ and each $\mathcal{U}_{i}$
is $r$-disjoint.

\end{defi}

 Let  $\mathcal{X}$ and $\mathcal{Y}$ be metric families.
A map of families from $\mathcal{X}$ to $\mathcal{Y}$ is a collection of functions
$F = \{f \}$, each mapping some $X \in \mathcal{X}$ to some $Y \in \mathcal{Y}$ and such that every $X \in \mathcal{X}$ is the
domain of at least one $f \in F$. We use the notation $F : \mathcal{X}\rightarrow\mathcal{Y}$ and, when confusion could
occur, write $f : X_{f }\rightarrow Y_{f }$ to refer to an individual function in $F$.

\begin{defi}
A map of families $F : \mathcal{X}\rightarrow\mathcal{Y}$ is \emph{uniformly expansive} if there exists a non-decreasing
function $\theta : [0,\infty)\rightarrow[0,\infty)$ such that for every $f \in F$ and every $x, y \in X_{f}$,
\[
d(f(x),f(y))\leq \theta(d(x,y)).
\]
$F : \mathcal{X}\rightarrow\mathcal{Y}$ is \emph{effectively proper} if there exists a proper non-decreasing function
$\delta : [0,\infty)\rightarrow[0,\infty)$ such that for every $f \in F$ and every $x, y \in X_{f}$,
\[
d(f(x),f(y))\geq \delta(d(x,y)).
\]
And $F : \mathcal{X}\rightarrow\mathcal{Y}$ is a \emph{coarse embedding} if it is both uniformly expansive and effectively proper.

Two maps $f,g:X\longrightarrow Y$ are \emph{close} if $\{d(f(x), g(x)): x\in X\}$ is a bounded set.
If $f:X\longrightarrow Y$ is a coarse embedding and there exists a coarse embedding $g:Y\longrightarrow X$
such that $f\circ g$ and $g\circ f$ are close to the identities on $X$ and $Y$ respectively, then $f$ is called a \emph{coarse equivalence}.

\end{defi}

\begin{defi}{(\cite{Yu2012},\cite{Yu2013})}
\label{def:rdecomp}
A metric family $\mathcal{X}$ is \emph{$r$-decomposable} over a
metric family $\mathcal{Y}$ if every $X\in \mathcal{X}$ admits a
decomposition \[ X =X_{0}\cup X_{1},
X_{i}=\bigsqcup_{r-\text{disjoint}} X_{ij},
\]
where each $X_{ij}\in \mathcal{Y}$. It is denoted by
$\mathcal{X}\stackrel{r}{\rightarrow}\mathcal{Y}$.
\end{defi}
\begin{remark}
To express the idea that $X$ is the union of $X_{i}$ and the collection of these subspaces $\{X_{i}\}$ is $r$-disjoint,
we write
\[
X=\bigsqcup_{r-\text{disjoint}} X_{i}.
\]
\end{remark}
\begin{defi}{(\cite{Yu2012},\cite{Yu2013})}
\label{def:bound}
\begin{itemize}
\item[(1)]Let $\mathcal{D}_{0}$ be the collection of uniformly bounded families:
$\mathcal{D}_{0}=\{\mathcal{X}: \mathcal{X} \text{ is uniformly bounded
}\}$.
\item[(2)]Let $\alpha$ be an ordinal greater
than 0, let $\mathcal{D}_{\alpha}$ be the collection of metric
families decomposable over $\displaystyle\bigcup_{\beta<\alpha}\mathcal{D}_{\beta}$:
\[\mathcal{D}_{\alpha}=\{\mathcal{X}: \forall~ r>0, ~\exists~
\beta<\alpha, ~\exists~ \mathcal{Y}\in\mathcal{D}_{\beta}, ~\text{
such that } \mathcal{X}\stackrel{r}{\rightarrow}\mathcal{Y} \}.\]
\end{itemize}
\end{defi}

\begin{defi}{\rm(\cite{Yu2012},\cite{Yu2013})}
\begin{itemize}
\item A metric family $\mathcal{X}$ has \emph{finite decomposition
complexity}  if there exists a countable ordinal $\alpha$ such that
$\mathcal{X}\in\mathcal{D_{\alpha}}$.

\item  We say that \emph{the complexity} of the metric family $\mathcal{X}$ is $\alpha$ if~$
\mathcal{X}\in \mathcal{D}_{\alpha}.$
\item  We say that  \emph{the exact complexity} of $\mathcal{X}$ is $\alpha$ if~$
\mathcal{X}\in \mathcal{D}_{\alpha}$ and $\forall~\beta<\alpha$, $\mathcal{X}~\bar{\in}~\mathcal{D}_{\beta}.$

\end{itemize}
\end{defi}

\begin{remark}
\begin{itemize}
\item Note that for any $ \beta<\alpha,
\mathcal{D}_{\beta}\subseteq\mathcal{D}_{\alpha}.$
\item  We view a single metric space $X$ as a
metric family with a single element.
\end{itemize}
\end{remark}

~It is known that a metric space $X$ has finite asymptotic dimension if and only if $X\in\mathcal{D}_{n}$ for some $n\in\NN$~ (\cite{Yu2013},\cite{Yan2011}).\\

\begin{defi}
~We say that a metric family $\mathcal{X}$ has \emph{uniformly finite asymptotic dimension} if~$
\mathcal{X}\in\mathcal{D}_{n}$ for some $n\in\NN$.
\end{defi}

\begin{lem}{\rm(\cite{Yu2012},\cite{Yu2013})}(Coarse invariance)
\label{lem:qisomecomplexity}
Let $\mathcal{X}$ and $\mathcal{Y}$ be two metric families and there is a coarse embedding~~$
\phi: \mathcal{X}\rightarrow \mathcal{Y}$. If $\mathcal{Y}\in\mathcal{D_{\alpha}}$~~for some countable ordinal $\alpha$, then $\mathcal{X}\in\mathcal{D_{\alpha}}$. Consequently, if $\phi$ is a coarse equivalence, then $\mathcal{X}\in\mathcal{D_{\alpha}}$~~if and only if
$\mathcal{Y}\in\mathcal{D_{\alpha}}$.
In particular, if $X$ is a subspace of a metric space $Y$ and $Y\in\mathcal{D_{\alpha}}$, then $X\in\mathcal{D_{\alpha}}$.
\end{lem}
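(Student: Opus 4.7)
The plan is transfinite induction on $\alpha$, with the coarse embedding $\phi:\mathcal{X}\to\mathcal{Y}$ having non-decreasing control functions $\theta$ (uniform expansion) and $\delta$ (effective properness). For the base case $\alpha=0$, suppose $\mathcal{Y}$ is uniformly bounded with $\diam\,\mathcal{Y}\leq M$. For each $X\in\mathcal{X}$ pick $f\in\phi$ with domain $X$ and codomain some $Y\in\mathcal{Y}$; effective properness yields $\delta(d(x,y))\leq d(f(x),f(y))\leq M$ for all $x,y\in X$, and properness of $\delta$ forces $d(x,y)\leq R$ where $R$ depends only on $M$ and $\delta$. Hence $\mathcal{X}\in\mathcal{D}_{0}$.

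For the inductive step, assume the lemma holds for all $\beta<\alpha$ and that $\mathcal{Y}\in\mathcal{D}_{\alpha}$. Given $r>0$, pick $s>\theta(r)$. By hypothesis there exist $\beta<\alpha$ and $\mathcal{Z}\in\mathcal{D}_{\beta}$ with $\mathcal{Y}\stackrel{s}{\rightarrow}\mathcal{Z}$. For each $X\in\mathcal{X}$ with chosen $f:X\to Y$, write the $s$-decomposition of $Y$ as $Y=Y_{0}\cup Y_{1}$ with $Y_{i}=\bigsqcup_{s\text{-disjoint}} Y_{ij}$ and $Y_{ij}\in\mathcal{Z}$, and pull back via $f$: set $X_{i}=f^{-1}(Y_{i})$ and $X_{ij}=f^{-1}(Y_{ij})$. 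If $x\in X_{ij}$ and $x'\in X_{ij'}$ with $j\neq j'$ and $d(x,x')<r$, then $d(f(x),f(x'))\leq\theta(r)<s$, contradicting $s$-disjointness of $Y_{ij}$ and $Y_{ij'}$. Therefore $\mathcal{X}\stackrel{r}{\rightarrow}\mathcal{W}$, where $\mathcal{W}$ is the metric family of all pullback pieces $X_{ij}$. The restriction of $\phi$ sends $\mathcal{W}$ into $\mathcal{Z}$ with the same control functions $\theta,\delta$, so by the inductive hypothesis $\mathcal{W}\in\mathcal{D}_{\beta}$, and hence $\mathcal{X}\in\mathcal{D}_{\alpha}$.

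The coarse equivalence statement then follows by applying the first assertion to $\phi$ and to its coarse inverse, and the subspace statement follows because the inclusion $X\hookrightarrow Y$ is itself a coarse embedding with $\theta(t)=\delta(t)=t$. I do not anticipate a serious obstacle here: the only point that needs care is choosing $s$ strictly larger than $\theta(r)$ in the inductive step, so that the merely non-decreasing function $\theta$ still produces the strict inequality needed to verify $r$-disjointness of the pullback decomposition; beyond this bookkeeping the argument is a routine transfinite induction.
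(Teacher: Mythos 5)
Your proof is correct. The paper does not prove this lemma --- it is quoted from Guentner--Tessera--Yu --- and your transfinite induction on $\alpha$, pulling the decomposition of $Y$ back through $f$ and using the carefully chosen gap $s>\theta(r)$ to preserve $r$-disjointness, is essentially the standard argument from those references; the base case via properness of $\delta$ and the derivation of the coarse-equivalence and subspace statements are also handled correctly.
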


It is easy to obtain the following two Lemmas by simple induction.
\begin{lem}{\rm(\cite{Yan2011})}
\label{lem:inv}
Let $X$ be a metric space with a left-invariant metric and
$\{X_{i}\}_{i}$ be a sequence of subspaces of $X$ with the induced
metric. $\text{If }\{X_{i}\}_{i}\in\mathcal{D}_{\alpha}, \text{then
}\{gX_{i}\}_{g,i}\in\mathcal{D}_{\alpha}, $ $\text{where } gX_{i}=\{
gh |h\in X_{i} \}$.

\end{lem}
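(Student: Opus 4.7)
The plan is to proceed by transfinite induction on the complexity ordinal $\alpha$, exploiting the fact that left-invariance of the metric makes every left-multiplication map $h \mapsto gh$ an isometry of $X$. Thus translating any subset $X_i$ by $g$ preserves all metric data---diameters, pairwise distances, and $r$-disjointness of collections---so essentially all the induction does is propagate this observation through the layers of Definition 2.3.

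For the base case $\alpha = 0$, if $\{X_i\}$ is uniformly bounded by some $D$, then $\diam(gX_i) = \diam(X_i) \leq D$ for every $g$, so $\{gX_i\}_{g,i} \in \mathcal{D}_0$. For $\alpha > 0$, fix $r > 0$. By hypothesis there exist $\beta < \alpha$ and $\mathcal{Y} \in \mathcal{D}_\beta$ with $\{X_i\}_i \stackrel{r}{\rightarrow} \mathcal{Y}$, so each $X_i = X_i^0 \cup X_i^1$ with $X_i^k = \bigsqcup_{r\text{-disjoint}} X_{ij}^k$ and every $X_{ij}^k \in \mathcal{Y}$. Form the subfamily $\mathcal{Y}' = \{X_{ij}^k\}_{i,j,k}$; it again lies in $\mathcal{D}_\beta$ (either by a direct check that the decomposition witnessing $\mathcal{Y} \in \mathcal{D}_\beta$ restricts to its subfamilies, or by Lemma 2.1 applied to the inclusion, which is trivially a coarse embedding). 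Since $\mathcal{Y}'$ consists of subspaces of $X$, the induction hypothesis yields $\{gX_{ij}^k\}_{g,i,j,k} \in \mathcal{D}_\beta$. Finally, applying the isometry $h \mapsto gh$ to the decomposition of $X_i$ gives $gX_i = gX_i^0 \cup gX_i^1$ with $gX_i^k = \bigsqcup_{r\text{-disjoint}} gX_{ij}^k$, so $\{gX_i\}_{g,i} \stackrel{r}{\rightarrow} \{gX_{ij}^k\}_{g,i,j,k}$. Since $r$ was arbitrary, $\{gX_i\}_{g,i} \in \mathcal{D}_\alpha$.

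The only mildly subtle point is the passage from the ambient $\mathcal{Y}$ (whose other elements are not claimed to be subspaces of $X$) to the subfamily $\mathcal{Y}'$ of actual decomposition pieces, since the induction hypothesis is phrased only for families of subspaces of $X$. Once that restriction is justified, everything else is the routine observation that the isometric action by $g$ preserves uniform boundedness on the base level and preserves every ingredient of the $r$-decomposition relation $\stackrel{r}{\rightarrow}$ at successor levels, so the induction goes through with no further work.
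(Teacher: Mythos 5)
Your proof is correct and follows exactly the route the paper indicates: the paper states this lemma without proof (citing \cite{Yan2011}) and remarks that it follows ``by simple induction,'' which is precisely your transfinite induction on $\alpha$ using that left translations are isometries and hence preserve uniform boundedness and $r$-disjointness at every level. Your added care about restricting to the subfamily $\mathcal{Y}'$ of actual decomposition pieces (so the induction hypothesis applies to subspaces of $X$) is a legitimate and correctly handled detail.
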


\begin{lem}{\rm(\cite{Yan2011})}
\label{lem:metric}
Let $\mathcal{X}=\{X_{i}\}$ and  $\mathcal{Y}=\{Y_{j}\}$ be metric
families, $\mathcal{X\times
Y}\stackrel{\bigtriangleup}{=}\{X_{i}\times Y_{j}\}$. Then for any $ m, n\in \NN,$
if $\mathcal{X}\in \mathcal{D}_{ m}$ and
$\mathcal{Y}\in \mathcal{D}_{n}$, then
$\mathcal{X}\times \mathcal{Y}\in\mathcal{D}_{m+n}$.

\end{lem}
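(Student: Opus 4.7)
The plan is to proceed by induction on the sum $m+n \in \NN$. The base case $m=n=0$ just says that the product of two uniformly bounded families is uniformly bounded: for the standard product metric (e.g.\ $\ell^{1}$ or sup), one has $\diam(X_{i}\times Y_{j}) \leq \diam X_{i}+\diam Y_{j} \leq \diam \mathcal{X}+\diam \mathcal{Y}$, so $\mathcal{X}\times\mathcal{Y}\in \mathcal{D}_{0}$.

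For the inductive step, I would assume the conclusion whenever the index sum is strictly less than $m+n$, and by the symmetric roles of $\mathcal{X}$ and $\mathcal{Y}$ I may assume $m\geq 1$. Fix an arbitrary $r>0$. Since $\mathcal{X}\in\mathcal{D}_{m}$, Definition of $\mathcal{D}_{\alpha}$ provides some $\beta<m$, hence $\mathcal{X}'\in\mathcal{D}_{\beta}\subseteq\mathcal{D}_{m-1}$, with $\mathcal{X}\stackrel{r}{\rightarrow}\mathcal{X}'$. Concretely, every $X_{i}$ admits a decomposition
\[
X_{i}=X_{i,0}\cup X_{i,1},\qquad X_{i,\ell}=\bigsqcup_{r-\text{disjoint}} X_{i,\ell,k},\qquad X_{i,\ell,k}\in\mathcal{X}'.
\]

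I would then lift this decomposition to the product. For any $Y_{j}\in\mathcal{Y}$, writing
\[
X_{i}\times Y_{j}=(X_{i,0}\times Y_{j})\cup(X_{i,1}\times Y_{j}),\qquad X_{i,\ell}\times Y_{j}=\bigsqcup_{k}(X_{i,\ell,k}\times Y_{j}),
\]
the key (and essentially only) geometric observation is that under the product metric $d\bigl((x,y),(x',y')\bigr)\geq d_{X}(x,x')$, so the collection $\{X_{i,\ell,k}\times Y_{j}\}_{k}$ inherits $r$-disjointness from $\{X_{i,\ell,k}\}_{k}$. Hence $\mathcal{X}\times\mathcal{Y}\stackrel{r}{\rightarrow}\mathcal{X}'\times\mathcal{Y}$. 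By the inductive hypothesis applied to $\mathcal{X}'\in\mathcal{D}_{m-1}$ and $\mathcal{Y}\in\mathcal{D}_{n}$ (whose index sum is $m+n-1<m+n$), we get $\mathcal{X}'\times\mathcal{Y}\in\mathcal{D}_{m+n-1}$. Since $r$ was arbitrary and $m+n-1<m+n$, this is exactly the clause in Definition of $\mathcal{D}_{m+n}$ witnessing $\mathcal{X}\times\mathcal{Y}\in\mathcal{D}_{m+n}$.

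There is no real obstacle here; the statement is a straightforward finite-level fact and the proof is bookkeeping. The only points to be slightly careful about are choosing an induction quantity that strictly decreases in a single step (hence $m+n$, not $m$ alone), verifying that the product metric transfers $r$-disjointness between base slices, and remembering that when applying $\mathcal{X}\in\mathcal{D}_{m}$ to extract a witness family, that family lives in $\mathcal{D}_{m-1}$ via $\mathcal{D}_{\beta}\subseteq\mathcal{D}_{m-1}$ for $\beta\leq m-1$, so the inductive hypothesis really does apply.
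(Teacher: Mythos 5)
Your proof is correct, and it takes the same route the paper intends: the paper gives no details, simply asserting that the lemma follows "by simple induction" (citing [Yan2011]), and your induction on $m+n$ — peeling off one $r$-decomposition of the factor with positive index, transferring $r$-disjointness through the product metric, and invoking the inductive hypothesis on the resulting family of pieces — is exactly the standard way to carry that out. No gaps.
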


\begin{exa}
\label{ex:ZZ}
$\text{Let}~~ G=\bigoplus_{i=1}^{\infty}\ZZ\text{~~~(countable infinite direct sum)}$ with
 \[
d_{1}(g,h)=\sum_{n=1}^{\infty}\abs{n}\abs{g_{n}-f_{n}}, \forall
g=(g_{1},\cdots,g_{n},\cdots), h=(h_{1},\cdots,h_{n},\cdots)\in G.\]
It was proved that $(G, d_{1})\in\mathcal{D}_{\omega}$ , where $\omega$ is the smallest infinite ordinal number
~(\cite{Yu2013}).
\end{exa}

Inspired by the equivalent descriptions of finite asymptotic dimension ~(\cite{Bell2011}), here we give the equivalent descriptions of
metric spaces with complexity $\omega$.

Before stating the theorem, we recall some necessary definitions.

Let $X$ be a metric space and let $\mathcal{U}$ be a cover of $X$, the \emph{Lebesgue number} $L(\mathcal{U})$ of $\mathcal{U}$ is the largest number
~$\lambda$~such that if $A\subseteq X$ and diam~$A\leq\lambda$, then there exists some $U\in\mathcal{U}$ such that $A\subseteq U.$ \emph{The multiplicity}
$m(\mathcal{U})$ of $\mathcal{U}$ is the maximal number of elements of  $\mathcal{U}$ with a nonempty intersection. \emph{The $d$-multiplicity} of  $\mathcal{U}$ is
defined to be the largest $n$ such that there is a $x\in X$, satisfying $B_{d}(x)$ meets $n$ sets in $\mathcal{U}$.
A map $\varphi: X\rightarrow Y$ between metric spaces is \emph{$\varepsilon$ Lipschitz} if
\[
d(\varphi(x_{1}),\varphi(x_{2}))\leq \varepsilon d(x_{1},x_{2})~~\text{for every}~x_{1}\neq x_{2}\in X.
\]
We use the notation $l_{2}$ for the Hilbert space of square summable sequences, i.e.
\[
l_{2}=\{(x_{1}, x_{2},\cdots)~~|~~\sum_{n=1}^{\infty}\abs{x_{n}}^{2}<\infty\}.
\]
Let $\Delta$ denote the standard infinite dimensional simplex in $l_{2}$, i.e.
\[
\Delta=\{(x_{1}, x_{2},\cdots)\in l_{2}~~|~~\sum_{n=1}^{\infty}x_{n}=1, ~x_{n}\geq0\}.
\]
\emph{A uniform complex} is a simplicial complex considered to be a subset of $\Delta$ with each vertex at some basis element with the restricted metric.

\begin{thm}

Let $X$ be a metric space. The following conditions are equivalent.

\begin{itemize}
 \item[(1)] $X\in  \mathcal{D}_{\omega}$. i.e.  for every $r>0$, there exist $r$-disjoint families $\mathcal{V}_{0}$ and $\mathcal{V}_{1}$
 such that $\mathcal{V}_{0}\cup\mathcal{V}_{1}$ covers $X$ and the family $\mathcal{V}_{0}\cup\mathcal{V}_{1}$ has uniformly finite asymptotic dimension.
 \item [(2)]
 For every $d>0$, there exists a uniformly finite asymptotic dimension cover $\mathcal{V}$ of $X$ with $d$-multiplicity $\leq$ 2. i.e.
 \[
 \forall~x\in X, ~~~\sharp\{V\in \mathcal{V}|V\cap B_{d}(x)\neq \varnothing \}\leq2.
 \]
 \item [(3)]
 For every $\lambda>0$, there exists a uniformly finite asymptotic dimension cover $\mathcal{W}$ of $X$ with the Lebesgue number $L(\mathcal{W})>\lambda$
and the multiplicity
$m(\mathcal{W})\leq 2$.
 \item [(4)]
 For every $\varepsilon>0$, there exists an $\varepsilon$-Lipschitz map $\varphi: X\rightarrow K\subseteq\Delta$ to a uniform simplicial complex of dimension 1 such that $\{\varphi^{-1}(\text{st}~v)|v\in K^{(0)}\}$ has uniformly finite asymptotic dimension, where $\text{st}~v$ is the star of the vertex $v$ in the complex $K$.
 \item [(5)] For every uniformly bounded cover $\mathcal{V}$ of $X$, there is a cover $\mathcal{U}$ of $X$ with uniformly finite asymptotic dimension such that $\mathcal{V}$ refines $\mathcal{U}$ (i.e. every $V\in\mathcal{V}$ is contained in some element $U\in\mathcal{U}$)~and the multiplicity
$m(\mathcal{U})\leq 2$.
\end{itemize}

\begin{itemize}
\item (1)$\Rightarrow$(2):
For every $d>0$ and $r>2d$, there exist $r$-disjoint families $\mathcal{V}_{0}$ and $\mathcal{V}_{1}$
 such that $\mathcal{V}_{0}\cup\mathcal{V}_{1}$ covers $X$ and the family $\mathcal{V}_{0}\cup\mathcal{V}_{1}$ has uniformly finite asymptotic dimension.
 Let $\mathcal{V}=\mathcal{V}_{0}\cup\mathcal{V}_{1}$. For $x\in X$, if $V_{1}\cap B_{d}(x)\neq \varnothing$ and $V_{2}\cap B_{d}(x)\neq \varnothing$, then $d(V_{1}, V_{2})\leq2d<r.$ So $V_{1}$ and $V_{2}$ belong to distinct families $\mathcal{V}_{0}$ and $\mathcal{V}_{1}$.
 Therefore,
 \[
  \sharp\{V\in \mathcal{V}|V\cap B_{d}(x)\neq \varnothing \}\leq2.
  \]
\item (2)$\Rightarrow$(3): Let $\lambda>0$ be given and take a uniformly finite asymptotic dimension cover $\mathcal{V}$ of $X$
with $2\lambda$-multiplicity $\leq$ 2.
Define
$\widetilde{V}=N_{2\lambda}(V)=\{x\in X~|~d(x, V)<2\lambda\}$
and let $ \mathcal{W}=\{\widetilde{V}~|~V\in\mathcal{V}\}$. It is easy to see $ \mathcal{W}$ has uniformly finite asymptotic dimension by Lemma \ref{lem:qisomecomplexity}.  Note that $m(\mathcal{W})\leq 2$. i.e.
\[
\forall~x\in X, ~~ \sharp\{\widetilde{V}\in\mathcal{W}|~x\in\widetilde{V}\}\leq2.
\]
Indeed, if $x\in\widetilde{V}=N_{2\lambda}(V)$, then $d(x, V)<2\lambda$. It follows that
\[
V\cap B_{2\lambda}(x)\neq\varnothing.
\]
Since
\[
  \sharp\{V\in \mathcal{V}|V\cap B_{2\lambda}(x)\neq \varnothing \}\leq2,
  \]
  we have
  \[
  \sharp\{\widetilde{V}\in\mathcal{W}|~x\in\widetilde{V}\}\leq  \sharp\{V\in \mathcal{V}|V\cap B_{2\lambda}(x)\neq \varnothing \}\leq2.
  \]
 It is easy to see that $L(\mathcal{W})>\lambda$. Indeed, for every $A\subseteq X$ with diam~$A\leq\lambda$, choose $a\in A\subseteq X$.
 Since $\mathcal{V}$ is a cover of $X$, there exists $V\in\mathcal{V}$ such that $a\in V$ and hence
 \[
 A\subseteq B_{\lambda}(a)\subseteq N_{2\lambda}(V)=\widetilde{V}\in\mathcal{W}.
 \]

 \item (3)$\Rightarrow$(4): Let $\varepsilon>0$ be given and suppose that $\mathcal{W}$ is a uniformly finite asymptotic dimension cover  of $X$ with  $L(\mathcal{W})>\lambda=\frac{20}{\varepsilon}$
and $m(\mathcal{W})\leq 2$.
For each $W\in\mathcal{W}$, define $\varphi_{W}: X\longrightarrow[0,1]$ by
\[
\varphi_{W}(x)=\frac{d(x,X-W)}{\sum_{V\in\mathcal{W}}d(x, X-V)}.
\]
Note that $\varphi_{W}(x)=0$ if and only if $x~\overline{\in}~W$.
The maps $\{\varphi_{W}\}$ define a map $\varphi:X\longrightarrow K$ by
\[
\varphi(x)=\{\varphi_{W}(x)\}_{W\in\mathcal{W}}.
\]

It is easy to see that $x\in\varphi^{-1}(\text{st}~W)$, i.e. $\varphi_{W}(x)>0$
if and only if $x\in W$.
Then $\varphi^{-1}(\text{st}~W)=W$.
Hence $\{\varphi^{-1}(\text{st}~W)\}_{W\in\mathcal{W}}=\mathcal{W}$ has uniformly asymptotic dimension.
Finally, we check that $\varphi:X\longrightarrow K$ is $\varepsilon$ Lipschitz.
Note that
\[
~\abs{d(x,X-U)-d(y,X-U)}\leq d(x,y),~~\forall~x,y\in X,~\forall U\in\mathcal{W}
\]
and
\[
\sum_{W\in\mathcal{W}}d(x,X-W)\geq\frac{\lambda}{2},\forall~x\in X.
\]
Indeed, since diam~$B_{\frac{\lambda}{2}}(x)\leq\lambda$ and $L(\mathcal{W})>\lambda$,
there exists $W_{0}\in\mathcal{W}$ such that $B_{\frac{\lambda}{2}}(x)\subseteq W_{0}$.
So
\[
\sum_{W\in\mathcal{W}}d(x,X-W)\geq d(x,X-W_{0})\geq\frac{\lambda}{2}.
\]
Since  $m(\mathcal{W})\leq 2$
\[
\sharp\{W\in\mathcal{W}|~\varphi_{W}(x)>0\}\leq2,
\]
then we have

\[
\begin{split}
|\varphi_{U}(x)-\varphi_{U}(y)|&=\abs{\frac{d(x,X-U)}{\sum_{V\in\mathcal{W}}d(x, X-V)}-\frac{d(y,X-U)}{\sum_{V\in\mathcal{W}}d(y, X-V)}}\\
&\leq\abs{\frac{d(x,X-U)-d(y,X-U)}{\sum_{V\in\mathcal{W}}d(x, X-V)}}+\abs{\frac{d(y,X-U)}{\sum_{V\in\mathcal{W}}d(x, X-V)}-\frac{d(y,X-U)}{\sum_{V\in\mathcal{W}}d(y, X-V)}}\\
&\leq\frac{2}{\lambda}d(x,y)+\frac{d(y,X-U)}{\sum_{V\in\mathcal{W}}d(x, X-V)\sum_{V\in\mathcal{W}}d(y, X-V)}\sum_{V\in\mathcal{W}}\abs{d(y,X-V)-d(x,X-V)}\\
&\leq\frac{2}{\lambda}d(x,y)+\frac{2}{\lambda}*4d(x,y)\\
&=\frac{10}{\lambda}d(x,y).
\end{split}
\]
Therefore,
\[
\begin{split}
\|\varphi(x)-\varphi(y)\|_{2}&=(\sum_{U\in\mathcal{W}}\abs{\varphi_{U}(x)-\varphi_{U}(y)}^{2})^{\frac{1}{2}}\\
&\leq(4*\frac{100}{\lambda^{2}}d^{2}(x,y))^{\frac{1}{2}}\\
&=\frac{20}{\lambda}d(x,y)=\varepsilon d(x,y).
\end{split}
\]
\item (4)$\Rightarrow$(1): Let $r>0$ be given and  let $K$ be any uniform complex of dimension 1.
For each $i=0, 1$, let
    \[
    \mathcal{U}_{i}=\{st(b_{\sigma}, \beta^{2}K)~|~\sigma\subset K, dim~\sigma=i\},
    \]
    where $b_{\sigma}$ is the barycenter of $\sigma$ and $\beta^{2}K$ denotes the second barycentric  subdivision.
    It is easy to see that $ \mathcal{U}_{i}$ is $c$-disjoint for some constant $c>0$. Let $\varepsilon=\frac{c}{r}$,
    take an $\varepsilon$-Lipschitz map $\varphi: X\rightarrow K\subseteq\Delta$ to a uniform simplicial complex of dimension 1. Define
    \[
    \mathcal{V}_{i}=\{\varphi^{-1}(U)~|~U\in\mathcal{U}_{i}\},~~~~i=0,1.
    \]
    Then $\mathcal{V}_{i}$ is $r$-disjoint and $\mathcal{V}_{0}\cup\mathcal{V}_{1}$ covers $X$.
    For every $st(b_{\sigma}, \beta^{2}K)\in\mathcal{U}_{i}$, there exists $u\in K^{(0)}$ such that
    \[
    st(b_{\sigma}, \beta^{2}K)\subseteq st(u, K).
    \]
   and it follows that
      \[
    \varphi^{-1}(st(b_{\sigma}, \beta^{2}K))\subseteq \varphi^{-1}(st(u, K)).
    \]
    Since $\{\varphi^{-1}(\text{st}~u)|u\in K^{(0)}\}$ has uniformly finite asymptotic dimension,
    $$
    \mathcal{V}_{i}=\{\varphi^{-1}(st(b_{\sigma}, \beta^{2}K))~|~~\sigma\subset K, dim~\sigma=i\}
    $$
    has uniformly finite asymptotic dimension.
\item (1)$\Rightarrow$(5): Let $\mathcal{V}$ be given with diam~$\mathcal{V}\leq B$, for some positive constant $B$.
Take $r$-disjoint families $\mathcal{W}_{0}$ and $\mathcal{W}_{1}$ of uniformly finite asymptotic dimension with $r>2B$
and $\mathcal{W}_{0}\cup\mathcal{W}_{1}$ covers $X$.
For each $i=0,1$, let
\[
\mathcal{U}_{i}=\{N_{B}(W)~|~W\in\mathcal{W}_{i}\}.
\]
Since $\mathcal{W}_{i}$ is $r$-disjoint and $r>2B$, $\mathcal{U}_{i}$ is disjoint. Let $\mathcal{U}=\mathcal{U}_{0}\cup\mathcal{U}_{1}$,
then $m(\mathcal{U})\leq 2.$
For every $V\in\mathcal{V}$, since $\mathcal{W}_{0}\cup\mathcal{W}_{1}$ covers $X$, there is a $W\in\mathcal{W}_{0}\cup\mathcal{W}_{1}$ such that $W\cap V\neq\varnothing.$
Assume that $x_{0}\in W\cap V$, then for every $x\in V$, we have
\[
d(x,W)\leq d(x, x_{0})\leq \text{diam}~ V\leq \text{diam}~ \mathcal{V}\leq B.
\]
i.e. $x\in N_{B}(W)$.
Therefore, $V\subseteq N_{B}(W)\in\mathcal{U}$
and hence $\mathcal{V}$ refines $\mathcal{U}$.

\item (5)$\Rightarrow$(3): Let $\lambda>0$ be given and let $\mathcal{V}=\{B_{\lambda}(x)~|~x\in X\}$.
Clearly, $\mathcal{V}$ is a uniformly bounded cover of $X$.  So there is a cover $\mathcal{W}$ of $X$ with uniformly finite asymptotic dimension such that $\mathcal{V}$ refines $\mathcal{W}$ and $m(\mathcal{W})\leq 2$. Finally, we will show that $L(\mathcal{W})\leq \lambda$.
Indeed, for every $A\subseteq X$ and diam~$A\leq\lambda$, choose any $a\in A$, then $A\subseteq B_{\lambda}(a)\in\mathcal{V}$.
Since $\mathcal{V}$ refines $\mathcal{W}$, there is a $W\in \mathcal{W}$ such that $B_{\lambda}(a)\subseteq W$.
Therefore, $A\subseteq B_{\lambda}(a)\subseteq W$.
\end{itemize}

\end{thm}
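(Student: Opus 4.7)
The plan is to close the equivalences as a cycle $(1)\Rightarrow(2)\Rightarrow(3)\Rightarrow(4)\Rightarrow(1)$, together with the two extra arrows $(1)\Rightarrow(5)\Rightarrow(3)$ that fold in the fifth condition. The arguments parallel the classical equivalent descriptions of finite asymptotic dimension in the Bell--Dranishnikov style, but at each step one must carry along the extra data that the relevant cover has \emph{uniformly finite asymptotic dimension}; this transfers essentially for free because every operation I perform (thickening by a bounded constant, passing to a subfamily, pulling back under a uniformly expansive map) is coarse at the level of families, so Lemma~\ref{lem:qisomecomplexity} does the work.

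For $(1)\Rightarrow(2)$ I take $r>2d$: any two sets both meeting $B_d(x)$ are at distance $\leq 2d<r$, so they lie in distinct families, and the $d$-multiplicity is at most $2$. For $(2)\Rightarrow(3)$ I start from a cover with $2\lambda$-multiplicity $\leq 2$ and thicken each member $V$ to $\widetilde V=N_{2\lambda}(V)$; this preserves uniformly finite asymptotic dimension, turns ``$x\in\widetilde V$'' into ``$V\cap B_{2\lambda}(x)\neq\varnothing$'' (so $m\leq 2$), and gives $L>\lambda$ because any set of diameter $\leq\lambda$ containing $a\in V$ sits inside $B_\lambda(a)\subseteq\widetilde V$. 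For $(3)\Rightarrow(4)$ I use the partition-of-unity map $\varphi_W(x)=d(x,X\setminus W)/\sum_V d(x,X\setminus V)$ into the nerve $K$, which is $1$-dimensional because $m(\mathcal{W})\leq 2$; the Lipschitz constant is controlled by the $1$-Lipschitzness of $d(\,\cdot\,,X\setminus V)$, the lower bound $\sum_V d(x,X\setminus V)\geq\lambda/2$ (choose $W_0\supseteq B_{\lambda/2}(x)$ from $L(\mathcal{W})>\lambda$), and the fact that at most two terms in the sum are nonzero, with $\lambda=20/\varepsilon$ delivering the $\varepsilon$-Lipschitz bound; the identity $\varphi^{-1}(\text{st}\,W)=W$ transfers uniformly finite asymptotic dimension to the star preimages. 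For $(4)\Rightarrow(1)$ I invoke the standard fact that in the double barycentric subdivision $\beta^2 K$ of the $1$-dimensional target, the families $\mathcal{U}_i=\{\text{st}(b_\sigma,\beta^2 K):\dim\sigma=i\}$, $i=0,1$, are $c$-disjoint and cover $K$; an $\varepsilon$-Lipschitz $\varphi$ with $\varepsilon=c/r$ pulls them back to $r$-disjoint covers of $X$, and since every $\varphi^{-1}(\text{st}(b_\sigma,\beta^2 K))$ is contained in some $\varphi^{-1}(\text{st}(u,K))$ with $u\in K^{(0)}$, Lemma~\ref{lem:qisomecomplexity} again gives uniformly finite asymptotic dimension.

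For $(1)\Leftrightarrow(5)$: in the direction $(1)\Rightarrow(5)$, given a uniformly bounded cover $\mathcal{V}$ with $\text{diam}\,\mathcal{V}\leq B$ and $r$-disjoint families from $(1)$ with $r>2B$, the $B$-neighbourhoods $N_B(W)$ stay disjoint, inherit uniformly finite asymptotic dimension, and absorb every $V\in\mathcal{V}$ because $V$ meets some $W$ in the cover. Conversely, refining the canonical cover $\{B_\lambda(x):x\in X\}$ by $\mathcal{W}$ forces $L(\mathcal{W})\geq\lambda$, giving $(5)\Rightarrow(3)$. The step I expect to be most delicate is $(3)\Rightarrow(4)$, where the Lipschitz constant of the partition-of-unity map has to be squeezed out of the two hypotheses simultaneously: one splits $|\varphi_U(x)-\varphi_U(y)|$ into a numerator difference and a denominator difference, estimates each by an explicit multiple of $d(x,y)/\lambda$, and then uses the multiplicity bound to ensure that the $\ell_2$-norm of $\varphi(x)-\varphi(y)$ has only a bounded number of nonzero coordinates before summing. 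Everything else is either an application of Lemma~\ref{lem:qisomecomplexity} or elementary set-theoretic bookkeeping.
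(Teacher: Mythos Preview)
Your proposal is correct and follows the paper's own proof essentially step for step: the same cycle $(1)\Rightarrow(2)\Rightarrow(3)\Rightarrow(4)\Rightarrow(1)$ with the side branch $(1)\Rightarrow(5)\Rightarrow(3)$, the same choices of parameters ($r>2d$, the $2\lambda$-thickening, $\lambda=20/\varepsilon$, the second barycentric subdivision, $r>2B$), and the same appeals to Lemma~\ref{lem:qisomecomplexity} to transfer uniformly finite asymptotic dimension under thickenings and subfamilies. There is no substantive difference in method or organization.
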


\end{section}

\begin{section}{The exact complexity of the product of wreath products}

\begin{defi}
Let $G$ be a countable discrete group. A \emph{length function} $l : G \longrightarrow \RR_{+}$ on  $G$ is a function satisfying: for all $g, f\in G$,
\begin{itemize}
\item  $l(g)=0$ if and only if $g$ is the identity element of $G$,
\item  $l(g^{-1})=l(g),$\
\item  $l(gf)\leq l(g)+l(f).$
\end{itemize}
We say that the metric $d$, defined as follows:
\[
d(s,t)=l(s^{-1}t) ~~~\forall~s,t~\in G,
\]
is the metric induced by the length function $l$.
\end{defi}
A length function $l$ is called \emph{proper} if for all $C>0, l^{-1}([0,C])\subset G$ is finite.

Let $S$ be a finite generating set for a group $G$, for any $ g\in G$, define $|g|_{S}$ to be
the length of the shortest word representing $g$ in elements of $S\cup S^{-1}$.
We say that $|\cdot|_{S}$ is \emph{word-length function} for $G$ with respect to $S$.
The left-invariant word-metric $d_{S}$ on $G$ is induced by word-length function. i.e.,
for every $ g, h\in G$,
\[
d_{S}(g, h)=|g^{-1}h|_{S}.
\]
Note that the word-length function of a finitely generated group is a proper length function.
The Cayley graph is the graph whose vertex set is $G$, one vertex for each element in $G$
and any two vertices $ g, h\in G$ are incident with an edge if and only if $g^{-1}h\in S\cup S^{-1}$.

\begin{lem}~\rm(\cite{Willett2008})
\label{lem:groupequivalent}
A countable discrete group admits a proper length function $l$ and that any two metrics of a countable discrete group induced by proper length functions
are coarsely equivalent.
\end{lem}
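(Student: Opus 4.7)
The plan is to prove the two assertions separately: first construct a proper length function on an arbitrary countable discrete group, then show that the identity map between any two metrics induced by proper length functions is a coarse equivalence.

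For the existence, enumerate the non-identity elements of $G$ as $\{g_1,g_2,\ldots\}$ and define
\[
l(g)=\min\Bigl\{\sum_{i=1}^k n_i:\ g=g_{n_1}^{\epsilon_1}\cdots g_{n_k}^{\epsilon_k},\ \epsilon_i\in\{\pm 1\},\ k\geq 1\Bigr\}
\]
for $g\neq e$, and $l(e)=0$. The length-function axioms are routine: subadditivity follows by concatenating representing words; symmetry by reversing the word and flipping the signs of the exponents; and $l(g)\geq 1$ for $g\neq e$ because every representing word contains at least one letter with index at least $1$. Properness is transparent: $l(g)\leq C$ forces both $k\leq C$ and each $n_i\leq C$, so only finitely many possible words appear, hence only finitely many such $g$.

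For the coarse invariance, let $l_1,l_2$ be proper length functions on $G$ with induced metrics $d_1,d_2$, and consider the identity map $\mathrm{id}:(G,d_1)\to(G,d_2)$. Define
\[
\theta(R)=\max\{l_2(g):l_1(g)\leq R\},\qquad \delta(R)=\inf\{l_2(g):l_1(g)\geq R\}.
\]
The set $\{g:l_1(g)\leq R\}$ is finite by properness of $l_1$, so $\theta(R)<\infty$ and $\theta$ is non-decreasing. The function $\delta$ is non-decreasing as an infimum over a nested family, and it is proper: if $\delta(R_n)\leq M$ along some $R_n\to\infty$, one could select $g_n$ with $l_2(g_n)\leq M+1$ and $l_1(g_n)\to\infty$, producing infinitely many elements in the finite set $\{g:l_2(g)\leq M+1\}$, a contradiction. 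Substituting $g=s^{-1}t$ and $R=l_1(g)$ into the two definitions yields
\[
\delta(d_1(s,t))\leq d_2(s,t)\leq \theta(d_1(s,t)),
\]
so $\mathrm{id}$ is a coarse embedding; swapping the roles of $l_1$ and $l_2$ shows the inverse identity is as well, which is precisely coarse equivalence.

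The only non-cosmetic step is the properness of $\delta$, which uses exactly the finiteness of bounded $l_2$-balls; everything else reduces to unwinding the definitions. If one wanted to avoid a direct $\delta$, an alternative would be to observe that uniform expansiveness of $\mathrm{id}:(G,d_1)\to(G,d_2)$ together with uniform expansiveness of $\mathrm{id}:(G,d_2)\to(G,d_1)$ already force effective properness in both directions, so one really only needs to build $\theta$ in each direction; this is conceptually cleaner but requires the same appeal to properness of both length functions.
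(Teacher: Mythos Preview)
The paper does not supply its own proof of this lemma; it merely cites \cite{Willett2008} and moves on. Your argument is correct and is essentially the standard one appearing in Willett's notes: the weighted word-length construction for existence, and the comparison of balls via properness for coarse equivalence. One cosmetic point: your $\delta(R)=\inf\{l_2(g):l_1(g)\geq R\}$ would be $+\infty$ once $R$ exceeds $\max l_1$ when $G$ is finite, so strictly speaking it fails to land in $[0,\infty)$ as the paper's Definition~2.2 requires; but for finite $G$ both metrics are bounded and the coarse equivalence is trivial, so this is harmless.
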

By Lemma \ref{lem:qisomecomplexity}, finite decomposition complexity is a coarsely invariant property of metric spaces.
As a consequence, we say  that \emph{a discrete group has finite decomposition
complexity} if its underlying metric space has finite decomposition
complexity for some (equivalently every) metric induced by proper length function.

Let $G$ and $N$ be finitely generated groups and let $1_{G}\in G$
and $1_{N}\in N$ be their units. The \emph{support} of a function $f:
N\rightarrow G$ is the set \[
\text{supp}(f)=\{x\in N
|f(x)\neq1_{G}\}.
\]
 The direct sum $\displaystyle\bigoplus_{N}G$ of groups $G$ (or
restricted direct product) is the group of functions
\[
C_{0}(N,G)=\{f: N\rightarrow G \text{ with finite support}\}.
\]
There is a natural action of $N$ on $C_{0}(N,G)$: for all $ a\in N, x\in N, f\in C_{0}(N,G)$,
\[
 a(f)(x) =
f(xa^{-1}).
\]
The semidirect product $C_{0}(N,G)\rtimes N$ is called \emph{restricted wreath
product} and is denoted as $G\wr N$. We recall that the product
in $G\wr N$ is defined by the formula
\[
(f, a)(g, b)=(f a(g), ab) \qquad \forall f,g\in C_{0}(N,G), a,b\in N.
\]
Let $S$ and $T$ be finite generating sets for $G$ and $N$,
respectively. Let $e\in C_{0}(N,G)$ denotes the constant function
taking value $1_{G}$, and let
$
\delta^{b}_{v}:N\rightarrow G, v\in N,
b\in G
$  be the $\delta$-function, i.e.
\[
\delta^{b}_{v}(v)=b
\text{ and }\delta^{b}_{v}(x)=1_{G} \text{ for }x\neq v.
\]
 Note
that $a(\delta^{b}_{v})=\delta^{b}_{va}$ and hence
$(\delta^{b}_{v} , 1_{N})=(e, v)(\delta^{b}_{1_{N}}, 1_{N})(e, v^{-1})$.
Since every function $f\in C_{0}(N,G)$ can be presented
$\delta^{b_{1}}_{v_{1}}\cdots\delta^{b_{k}}_{v_{k}}$,
\[
(f,
1_{N})=(\delta^{b_{1}}_{v_{1}},1_{N})\cdots(\delta^{b_{k}}_{v_{k}},
1_{N}) \text{ and } (f, u)=(f, 1_{N})(e, u).
\]
 The set
$\widetilde{S}=\{(\delta^{s}_{1_{N}},1_{N}) , (e, t) |s\in S, t\in
T \}$ is a generating set for $G\wr N$.
Note that $G$ and $N$ are subgroups of $G\wr N$.

An explicit formula for the word length of wreath products was found by Parry.
\begin{lem}\rm(\cite{parry})
\label{lem:lenfun}
Let $x=(f,v)\in G \wr N$, where $f\in C_{0}(N,G)$ and $v\in N$.
Assume that $f=\delta^{b_{1}}_{v_{1}}\cdots\delta^{b_{n}}_{v_{n}}$,
let $p(x)$ be the shortest path in the Cayley graph of $N$ which
starts at $1_{N}$, visits all vertices $v_{i}$ and ends at $v$.
Then
\[
|x|_{G \wr N}=|p(x)|+\displaystyle\sum_{k=1}^{n} |b_{k}|_{G}.
\]
\end{lem}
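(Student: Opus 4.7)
The plan is to prove the formula by establishing matching upper and lower bounds, using throughout the semidirect product identity $(h,a)(h',b) = (h \cdot a(h'),\, ab)$ and the action formula $a(\delta^{b}_v) = \delta^{b}_{va}$ in $G \wr N$.

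For the \emph{upper bound}, I would exhibit an explicit word for $x$ of length $|p(x)| + \sum_{k=1}^n |b_k|_G$. After reordering the factors of $f$ (permissible since distinct $\delta$-functions commute), I may assume that $p(x)$ visits $v_{1}, v_{2}, \ldots, v_{n}$ in that order and ends at $v$. Walk along $p(x)$ using $(e,t)^{\pm 1}$-generators, and at each vertex $v_{k}$ insert a shortest $S$-word for $b_{k}$ using generators of the form $(\delta^{s}_{1_N}, 1_N)^{\pm 1}$. Because multiplying a partial product $(F',w)$ on the right by $(\delta^{s}_{1_N}, 1_N)$ yields $(F' \cdot \delta^{s}_{w}, w)$, inserting the letters of $b_k$ while at position $w = v_k$ deposits exactly $\delta^{b_k}_{v_k}$ into the $C_0$-component. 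The resulting element is $x$, written with $|p(x)| + \sum_k |b_k|_G$ generators.

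For the \emph{lower bound}, let $x = g_1 g_2 \cdots g_M$ be an arbitrary minimal-length expression in $\widetilde{S}^{\pm 1}$ and write $g_j = (h_j, n_j)$. Setting $W_j = n_1 \cdots n_j$ (so $W_0 = 1_N$ and $W_M = v$), an easy induction on the semidirect product formula gives
\[
x = \Bigl(\prod_{j=1}^{M} W_{j-1}(h_j),\; W_M\Bigr).
\]
Each generator of the form $(\delta^{s^{\pm 1}}_{1_N}, 1_N)^{\pm 1}$ contributes a factor $\delta^{s^{\pm 1}}_{W_{j-1}}$ to the $C_0$-component, while each $(e, t)^{\pm 1}$ contributes $e$ and advances $W_{j-1} \to W_j$ along a Cayley edge of $N$. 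Grouping the deposits by support point and using $f = \delta^{b_1}_{v_1} \cdots \delta^{b_n}_{v_n}$, the deposits at each $v_i$ must compose to $b_i$ as a word in $S \cup S^{-1}$, so there are at least $|b_i|_G$ of them; deposits at points outside $\{v_1, \ldots, v_n\}$ must cancel, contributing non-negatively. Hence the number of $\delta$-type generators is $\geq \sum_k |b_k|_G$. For the $N$-component, the sequence $W_0, W_1, \ldots, W_M$, after collapsing repeats, is a walk in the Cayley graph of $N$ from $1_N$ to $v$ that visits every $v_i$ (since a deposit at $v_i$ forces $W_{j-1} = v_i$ for some $j$); its length equals the number of $(e,t)^{\pm 1}$-generators and is $\geq |p(x)|$ by the defining minimality of $p(x)$. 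Summing the two contributions gives $M \geq |p(x)| + \sum_k |b_k|_G$.

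The main obstacle is the careful bookkeeping in the lower bound: one must track the two generator types separately, correctly read off each deposit $\delta^{s^{\pm 1}}_{W_{j-1}}$ at the current walk position via the semidirect product formula, and verify that cancellations among deposits (at both support and non-support vertices) cannot shrink the $\delta$-generator count below $\sum_k |b_k|_G$. Once this is in place, the path-length bound is immediate from the visiting requirement on the walk.
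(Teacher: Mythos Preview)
The paper does not prove this lemma; it is quoted from Parry~\cite{parry} without any argument, so there is nothing in the paper to compare your proposal against.

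That said, your two-sided bound is correct and is essentially the standard proof of Parry's formula. For the upper bound you exhibit an efficient word by walking along $p(x)$ with $(e,t)^{\pm1}$-generators and depositing a geodesic word for each $b_k$ at the moment the walk reaches $v_k$; the semidirect-product identity $(F',w)(\delta^{s}_{1_N},1_N)=(F'\cdot\delta^{s}_{w},w)$ makes this work exactly as you describe. For the lower bound you correctly split an arbitrary minimal word into its $N$-letters, which trace a walk in the Cayley graph of $N$ from $1_N$ to $v$ visiting every $v_i$ (hence of length $\geq |p(x)|$), and its $\delta$-letters, whose deposits at each $v_i$ must multiply to $b_i$ (hence number $\geq |b_i|_G$), with any deposits off the support contributing nonnegatively. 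The bookkeeping you flag as the main obstacle is handled cleanly by your inductive formula $x=\bigl(\prod_j W_{j-1}(h_j),\,W_M\bigr)$.
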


The following statement immediately follows from the above Lemma.
\begin{coro}
\label{coro:lendirectsum}
For every $f=\delta^{b_{1}}_{v_{1}}\cdots\delta^{b_{n}}_{v_{n}}\in\bigoplus_{ \ZZ}\ZZ=C_{0}(\ZZ,\ZZ)\subset\ZZ\wr\ZZ$,
where $b_{i}\in\ZZ$ and $v_{i}\in\ZZ$. Let $w(f)$ be the shortest loop in the Cayley graph of $\ZZ$ which based at 0 and
visits all vertices $v_{i}$, then
\[
|f|_{\ZZ \wr \ZZ}=|w(f)|+\displaystyle\sum_{k=1}^{n} |b_{k}|.
\]

\end{coro}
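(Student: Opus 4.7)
The plan is to deduce Corollary 3.1 as a direct specialization of Parry's formula (Lemma 3.2) to the case $G = N = \ZZ$. The element $f \in \bigoplus_{\ZZ}\ZZ = C_{0}(\ZZ,\ZZ)$ is regarded as an element of $\ZZ \wr \ZZ$ via the natural inclusion $f \mapsto (f, 1_{N})$, where $1_{N} = 0$ is the identity of $N = \ZZ$. So I would set $x = (f, 0)$ in the notation of Lemma 3.2, which means the second coordinate $v$ appearing in that lemma is $0$.

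With this setup, $p(x)$ from Lemma 3.2 is the shortest path in the Cayley graph of $\ZZ$ that starts at $1_{N} = 0$, visits all the vertices $v_{1}, \ldots, v_{n}$, and ends at $v = 0$. A path in a graph that starts and ends at the same vertex is by definition a loop based at that vertex, so $p(x)$ coincides with $w(f)$ as defined in the statement of the corollary. Substituting into the conclusion of Lemma 3.2 then gives
\[
|f|_{\ZZ \wr \ZZ} = |p(x)| + \sum_{k=1}^{n}|b_{k}|_{\ZZ} = |w(f)| + \sum_{k=1}^{n}|b_{k}|,
\]
where the final equality uses that the word-length function on $\ZZ$ with respect to its standard generator $\{1\}$ is simply the absolute value.

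There is no real obstacle here; the corollary is essentially a notational rephrasing of Parry's formula in the special case $G = N = \ZZ$ and with trivial second coordinate. The only points requiring verification are the identification $p(x) = w(f)$, which follows from the fact that a shortest path between two coinciding endpoints is a shortest loop based at that point, and the identification of $|\cdot|_{\ZZ}$ with the absolute value on $\ZZ$.
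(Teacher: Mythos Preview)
Your proposal is correct and matches the paper's approach exactly: the paper simply states that the corollary ``immediately follows from the above Lemma'' (Parry's formula), and your argument spells out precisely this specialization to $G=N=\ZZ$ with second coordinate $0$, identifying $p(x)$ with the loop $w(f)$ and $|\cdot|_{\ZZ}$ with absolute value.
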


\begin{lem}\rm(\cite{Yu2013})
\label{lem:w}
Let $G$ be a finitely generated subgroup of $GL(n,\RR)$ for some natural number $n$, then $G\in \mathcal{D}_{\omega}$.

\end{lem}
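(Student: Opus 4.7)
The plan is to exhibit a coarse embedding of $G$ into a space of finite asymptotic dimension; the conclusion $G\in\mathcal{D}_\omega$ then follows from Lemma~\ref{lem:qisomecomplexity} together with the inclusion $\mathcal{D}_n\subset\mathcal{D}_\omega$ for every $n\in\NN$.

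First, since $G$ is finitely generated, all matrix entries of its generators lie in a finitely generated subring $R$ of $\RR$. A classical embedding theorem for finitely generated integral domains of characteristic zero yields a diagonal embedding of $R$ into a finite product $\prod_{v\in S}K_v$ of local fields, where $S$ is a suitable finite set of places of the fraction field of $R$. Promoting this entrywise produces a proper homomorphism $\iota:G\hookrightarrow\prod_{v\in S}GL(n,K_v)$.

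Second, each local factor $GL(n,K_v)$ acts properly and isometrically on a natural CAT(0) model space $X_v$: the symmetric space $GL(n,\RR)/O(n)$ (or its complex counterpart) when $v$ is archimedean, and the affine Bruhat--Tits building otherwise. Each $X_v$ has finite asymptotic dimension depending only on $n$, so iterating Lemma~\ref{lem:metric} gives $X:=\prod_{v\in S}X_v\in\mathcal{D}_d$ for some $d\in\NN$. The composition of $\iota$ with the orbit map at any basepoint $x_0\in X$ defines $\phi:G\to X$; it is uniformly expansive because each generator moves $x_0$ by a bounded amount, and effectively proper because $d_X(x_0,gx_0)$ being bounded forces $g$ into a compact, hence finite, subset of the discrete group $G$. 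Applying Lemma~\ref{lem:qisomecomplexity} to the coarse embedding $\phi$ yields $G\in\mathcal{D}_d\subset\mathcal{D}_\omega$.

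The principal obstacle lies in the first step: the finite set of places $S$ must be chosen so that the diagonal embedding lands $G$ discretely in the product. In the archimedean direction one must control all real and complex embeddings of the coefficient ring, while in the non-archimedean direction one must identify the finitely many primes at which the matrix entries of the generators acquire denominators. Once this setup is in place, the remaining ingredients are structural and essentially standard: the finite asymptotic dimension of rank-$n$ symmetric spaces and affine buildings is classical, and preservation under finite products is precisely Lemma~\ref{lem:metric}.
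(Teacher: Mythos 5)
The paper offers no proof of this lemma --- it is quoted verbatim from \cite{Yu2013} --- so your proposal has to be measured against the argument given there, and it breaks down at the very first step. The ``classical embedding theorem'' you invoke, placing a finitely generated subring $R\subseteq\RR$ properly (discretely) into a \emph{finite} product of local fields, does not exist once $R$ has positive transcendence degree over $\mathbb{Q}$. A discrete additive subgroup of $\RR^{a}\times\mathbb{C}^{b}\times k_{1}\times\cdots\times k_{s}$ (the $k_{i}$ non-archimedean) has finite rank: its intersection with $\RR^{a}\times\mathbb{C}^{b}\times U$, for $U$ a compact open subgroup of the non-archimedean part, is a lattice of rank at most $a+2b$, and the quotient by this intersection is torsion. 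By contrast the additive group of $\ZZ[\pi]\cong\ZZ[t]$ is free abelian of infinite rank, so no choice of finitely many places makes the diagonal map proper. Such an embedding exists exactly when the fraction field of $R$ is a number field, i.e.\ when $R$ is a ring of $S$-integers; the whole difficulty of the lemma lies in the transcendental case.

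The gap is not repairable within your scheme because your conclusion is strictly too strong: a coarse embedding of $G$ into a finite product of symmetric spaces and affine buildings would give $G\in\mathcal{D}_{d}$ for a \emph{finite} $d$, i.e.\ finite asymptotic dimension. Theorem \ref{thm:productwreath} of this very paper realizes $\ZZ\wr\ZZ$ as a subgroup of $GL(2,\RR)$ with matrix entries in $\ZZ[\pi,\pi^{-1}]$, and $\ZZ\wr\ZZ$ contains $\bigoplus_{\ZZ}\ZZ$ and has infinite asymptotic dimension --- that is precisely why its exact complexity is $\omega$ and not any finite ordinal. The proof in \cite{Yu2013} avoids this by using a countably infinite family of absolute values on the finitely generated coefficient field, chosen so that the associated total length function is proper; for each scale $r>0$ one decomposes $G$ over pieces controlled by only \emph{finitely many} of these valuations, and it is those pieces (not $G$ itself) that have uniformly finite asymptotic dimension via symmetric spaces and buildings. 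Using finitely many places at each scale, out of infinitely many in total, is exactly what produces the ordinal $\omega$; your argument collapses this to a single finite stage and thereby proves a false statement. The second half of your outline (finite asymptotic dimension of the model spaces, Lemma \ref{lem:metric} for products, Lemma \ref{lem:qisomecomplexity} for coarse invariance) is fine and does appear, in localized form, inside the correct proof.
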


\begin{thm}
\label{thm:productwreath}
For every $m\in\NN $,
let $G=(\ZZ \wr \ZZ)^{m}=\underbrace{\ZZ \wr \ZZ\times
\ZZ \wr \ZZ\times\cdots\times \ZZ \wr \ZZ}_{m}$.
Then $G \in
\mathcal{D}_\omega$ and for any $ \alpha < \omega, G
~\bar{\in}~ \mathcal{D}_{\alpha}$. i.e. the exact complexity of $G$ is $\omega$.

\begin{proof}
Since $\ZZ \wr \ZZ$ is a finitely generated group, $(\ZZ \wr \ZZ)^{m}$ is finitely generated.
By Lemma \ref{lem:w}, it suffices to show that $(\ZZ \wr \ZZ)^{m}$ is a subgroup of $GL(n,\RR)$ for some natural number $n$.
Define a map
$
\psi: \ZZ \wr \ZZ\rightarrow GL(2,\RR)
$ as follows: $\forall x=(f,n)\in\ZZ \wr \ZZ$, where $f\in\bigoplus_{\ZZ}\ZZ, n\in\ZZ$,

\begin{equation*}
\begin{split}
\psi(x) = \begin{pmatrix}{}1 & 0\\ \sum_{k\in \text{supp}f}f(k)\pi^{k} & \pi^{n}\end{pmatrix}
\end{split}
\end{equation*}
Now we will show that \[\forall x=(f,n), y=(g,m)\in\ZZ \wr \ZZ,~~ \psi(xy)=\psi(x)\psi(y).\]
Indeed,

\begin{equation*}
\begin{split}
\psi(x)\psi(y)& =
 \begin{pmatrix}{}1 & 0\\ \sum_{k\in \text{supp}f}f(k)\pi^{k} & \pi^{n}\end{pmatrix}\begin{pmatrix}{}1 & 0\\ \sum_{k\in \text{supp}g}g(k)\pi^{k} & \pi^{m}\end{pmatrix}\\&=\begin{pmatrix}{}1 & 0\\ \sum_{k\in \text{supp}f}f(k)\pi^{k}+\sum_{k\in \text{supp}g}g(k)\pi^{k+n} & \pi^{n+m}\end{pmatrix}
\end{split}
\end{equation*}

Since $xy=(f,n)(g,m)=(fn(g),n+m)$ and $fn(g)(k)=f(k)+g(k-n)$,
\begin{equation*}
\begin{split}
\psi(xy)& =
 \begin{pmatrix}{}1 & 0\\ \sum_{k\in \text{supp}f\cup\text{supp}n(g)}[f(k)+g(k-n)]\pi^{k} & \pi^{n+m}\end{pmatrix}
\end{split}
\end{equation*}

Note that
\[
\sum_{k\in \text{supp}f}f(k)\pi^{k}+\sum_{k\in \text{supp}g}g(k)\pi^{k+n}=\sum_{k\in \text{supp}f}f(k)\pi^{k}+\sum_{k\in \text{supp}n(g)}n(g)(k)\pi^{k}
= \sum_{k\in \text{supp}f\cup\text{supp}n(g)}[f(k)+g(k-n)]\pi^{k}.
\]

It follows that
\[
\forall x=(f,n), y=(g,m)\in\ZZ \wr \ZZ,~~ \psi(xy)=\psi(x)\psi(y).
\]
Define a map $\widetilde{\psi}:(\ZZ \wr \ZZ)^{m}\longrightarrow GL(2m, \RR)$ as follows: $\forall x=(x_{1},x_{2},\cdots, x_{m})\in(\ZZ \wr \ZZ)^{m}$,
\begin{equation*}
\begin{split}
\widetilde{\psi}(x) =
\text{diag}
( \psi(x_{1}),  \psi(x_{2}),  \psi(x_{3}), \ldots,  \psi(x_{m}))\in GL(2m,\RR).
\end{split}
\end{equation*}
It is easy to check that $\widetilde{\psi}$ is a group homomorphism. So $(\ZZ \wr \ZZ)^{m}$ can be considered as a subgroup of $GL(2m,\RR)$.

 Finally, since $\ZZ \wr \ZZ$ is a subgroup of
$(\ZZ \wr \ZZ)^{m}$ and $\ZZ \wr \ZZ$ does not have asymptotic dimension, we have
\[
\forall~\alpha < \omega, G
~\bar{\in}~ \mathcal{D}_{\alpha}.
\]
Therefore, the exact complexity of $G$ is $\omega$.

\end{proof}

\end{thm}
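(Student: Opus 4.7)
The plan is to prove the upper bound $G \in \mathcal{D}_\omega$ by constructing a faithful linear representation of $G$ and invoking Lemma \ref{lem:w}, then to derive the lower bound from the fact that $\ZZ \wr \ZZ$ already fails to have finite asymptotic dimension.

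For the upper bound I would first embed the single factor $\ZZ \wr \ZZ$ into $GL(2, \RR)$. The natural candidate is the map $\psi: \ZZ \wr \ZZ \longrightarrow GL(2, \RR)$ defined on $(f, n)$, with $f \in \bigoplus_{\ZZ}\ZZ$, by placing the shift $\pi^{n}$ in the diagonal slot and encoding $f$ as the Laurent polynomial $\sum_{k} f(k)\pi^{k}$ in the lower-left entry, using $\pi$ (a transcendental) so that distinct finitely supported functions give distinct entries. I would then verify the homomorphism identity $\psi(xy) = \psi(x)\psi(y)$ by direct multiplication of the two block matrices, the key point being that the shift action of $n \in \ZZ$ on $g \in \bigoplus_{\ZZ}\ZZ$ translates precisely into multiplying its polynomial by $\pi^{n}$, which is exactly what the $(2,1)$-entry of the matrix product produces. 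Injectivity follows since the polynomial encoding is faithful (use transcendence of $\pi$), and since the image lies in $GL(2,\RR)$.

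To handle the $m$-fold product, I would simply take block-diagonal matrices: define $\widetilde{\psi}: (\ZZ \wr \ZZ)^{m} \longrightarrow GL(2m, \RR)$ by $\widetilde{\psi}(x_{1}, \ldots, x_{m}) = \mathrm{diag}(\psi(x_{1}), \ldots, \psi(x_{m}))$. Since each $\psi$ is a homomorphism, so is $\widetilde{\psi}$, and injectivity of each $\psi$ gives injectivity of $\widetilde{\psi}$. Hence $(\ZZ \wr \ZZ)^{m}$ is a finitely generated subgroup of $GL(2m, \RR)$, and Lemma \ref{lem:w} yields $G \in \mathcal{D}_{\omega}$.

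For the lower bound, note that $\ZZ \wr \ZZ$ sits as a subgroup (say the first coordinate) of $(\ZZ \wr \ZZ)^{m}$, so by the coarse invariance in Lemma \ref{lem:qisomecomplexity}, any membership $G \in \mathcal{D}_{\alpha}$ would force $\ZZ \wr \ZZ \in \mathcal{D}_{\alpha}$. For $\alpha < \omega$ we have $\alpha = n \in \NN$, and $\mathcal{D}_{n}$-membership is equivalent to finite asymptotic dimension. Since $\ZZ \wr \ZZ$ is known to have infinite asymptotic dimension (the result of \cite{Yan2011} cited in the introduction), this is impossible, giving $G \,\bar\in\, \mathcal{D}_{\alpha}$ for every $\alpha < \omega$.

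The main obstacle is the homomorphism check for $\psi$: one must be careful that the semidirect product rule $(f,n)(g,m) = (f \cdot n(g), n+m)$ with $n(g)(k) = g(k-n)$ matches the identity $\sum_{k} f(k)\pi^{k} + \pi^{n}\sum_{k} g(k)\pi^{k} = \sum_{k}[f(k)+g(k-n)]\pi^{k}$ coming from matrix multiplication; this is the only place where the choice of $\pi$ as a transcendental really matters, and it is also what ensures faithfulness of the representation. The rest (block diagonal extension, invocation of Lemma \ref{lem:w}, and the subgroup argument for the lower bound) is formal.
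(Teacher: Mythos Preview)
Your proposal is correct and follows essentially the same approach as the paper: the same lower-triangular representation $\psi$ into $GL(2,\RR)$ using powers of $\pi$, the same block-diagonal extension $\widetilde{\psi}$ into $GL(2m,\RR)$, the appeal to Lemma~\ref{lem:w}, and the same subgroup argument for the lower bound. If anything, you are slightly more explicit than the paper in noting that transcendence of $\pi$ is what guarantees injectivity of $\psi$.
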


 \begin{lem}{\rm(\cite{Yan2011})}
\label{lem:ZZ}
Let $H $ be a countable group and $H^{m}=\underbrace{H\times
H\times\cdots\times H}_{m}$. For every $ r\in\NN, \text{ there
exist } m\in\NN $ and a metric family $\mathcal{Y}$ such that
\begin{itemize}
 \item[(1)] $H \wr \ZZ\stackrel{r}{\rightarrow}\mathcal{Y}, $
 \item [(2)]
there is a coarse embedding from $\mathcal{Y}$ to $\{g H^{m}\}_{g
\in \bigoplus H}.$
\end{itemize}

\end{lem}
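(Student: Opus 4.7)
The plan is to construct an $r$-decomposition of $H\wr\ZZ$ whose pieces are indexed by (i) a window in the $\ZZ$-coordinate of bounded length and (ii) a prescribed background value for the function coordinate outside that window; each such piece can then be coarsely identified with the subgroup $H^m\subset H\wr\ZZ$, and therefore sits in the family $\{gH^m\}_{g\in\bigoplus H}$ up to coarse equivalence.

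Given $r\in\NN$, I will take $m=5r$ and set up two overlapping covers of $\ZZ$ by length-$m$ intervals, namely $I_k^0=[6kr,\,6kr+5r)$ and $I_k^1=[6kr+3r,\,6kr+8r)$, with respective cores $C_k^0=[6kr+r,\,6kr+4r)$ and $C_k^1=[6kr+4r,\,6kr+7r)$. A short computation confirms that for each fixed $i\in\{0,1\}$ the family $\{I_k^i\}_k$ is $r$-disjoint, while the cores $\{C_k^i\}_{i,k}$ partition $\ZZ$ and each $C_k^i$ lies at distance $\geq r$ from the boundary of $I_k^i$. For each $(i,k)$ and each finitely supported $\bar g:\ZZ\setminus I_k^i\to H$, define
\[
Y_{\bar g,k,i}\;=\;\{(f,n)\in H\wr\ZZ:\,f|_{\ZZ\setminus I_k^i}=\bar g,\ n\in C_k^i\},
\]
let $\mathcal{Y}$ be the family of all such sets, and set $X_i=\bigsqcup_{k,\bar g}Y_{\bar g,k,i}$. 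Since every element of $H\wr\ZZ$ has its $\ZZ$-coordinate in a unique core, $H\wr\ZZ=X_0\cup X_1$.

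The main step is to verify $r$-disjointness of the pieces within each $X_i$. For two pieces of $X_i$ with different $k\neq k'$, the $\ZZ$-coordinates of any two elements are $\geq r$ apart by $r$-disjointness of same-colour intervals, and since the projection $(f,n)\mapsto n$ is $1$-Lipschitz, the wreath-product distance is $\geq r$. For two pieces with the same $(i,k)$ but distinct $\bar g\neq\bar g'$, I would pick $s\in\ZZ\setminus I_k^i$ with $\bar g(s)\neq\bar g'(s)$ and apply Parry's formula (Lemma~\ref{lem:lenfun}) to $(f,n)^{-1}(f',n')$; the function part of this product has support $\supp(f^{-1}f')-n$, which contains $s-n$, so any realising path in the Cayley graph of $\ZZ$ must start at $0$ and visit $s-n$, and since $n\in C_k^i$ is at distance $\geq r$ from $s\notin I_k^i$, the path length forces $d\geq r$. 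This Parry-style estimate, which couples the shift by $-n$ with the core condition, is the step I expect to be the main technical obstacle.

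To build the coarse embedding of families $\mathcal{Y}\to\{gH^m\}_{g\in\bigoplus H}$, let $a_k^i=\min I_k^i$ and define $\phi_{\bar g,k,i}:Y_{\bar g,k,i}\to H^m=eH^m$ by $(f,n)\mapsto(\tilde f,0)$, where $\tilde f(j)=f(j+a_k^i)$ for $j\in[0,m)$ and $\tilde f(j)=1_H$ otherwise. Applying Parry's formula on both sides shows that both $d_{H\wr\ZZ}((f,n),(f',n'))$ and $d_{H\wr\ZZ}(\phi(f,n),\phi(f',n'))$ equal $\sum_{j\in[0,m)}|\tilde f(j)^{-1}\tilde f'(j)|_H$ plus a path contribution bounded by $4m$ in either case. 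This yields uniform controls $\theta(t)=t+4m$ and $\delta(t)=\max(0,t-4m)$ that do not depend on the particular piece, completing the coarse embedding of $\mathcal{Y}$ into $\{gH^m\}_{g\in\bigoplus H}$.
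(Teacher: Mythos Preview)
The paper does not supply a proof of this lemma; it is quoted from \cite{Yan2011}. Your construction is correct and follows the expected strategy for that result: tile $\ZZ$ by two colours of length-$m$ windows with $r$-buffered cores, freeze the function coordinate outside the active window to index the pieces, and use Parry's formula (Lemma~\ref{lem:lenfun}) to control both the $r$-disjointness of the pieces and the uniform distance estimates needed for the coarse embedding into $H^m\subset H\wr\ZZ$.

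Two minor remarks. First, in the $k\neq k'$ case you appeal to ``$r$-disjointness of same-colour intervals $I_k^i$'', but the $\ZZ$-coordinates lie in the \emph{cores} $C_k^i$, so what you actually need (and have) is $r$-disjointness of same-colour cores; consecutive cores of the same colour are in fact $3r+1$ apart, so the step goes through. Second, your map $\phi_{\bar g,k,i}$ discards the $\ZZ$-coordinate $n$, so it is not injective; this is harmless because the fibres have diameter at most $3r<4m$, which is absorbed by your control function $\delta(t)=\max(0,t-4m)$, but it is worth saying explicitly.
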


\begin{thm}
\label{thm:multiwreath}

$(\ZZ \wr \ZZ)\wr\ZZ\in
\mathcal{D}_{\omega+1}$.
i.e. the  complexity of $(\ZZ \wr \ZZ)\wr\ZZ$ is $\omega+1$.
\end{thm}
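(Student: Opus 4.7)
The plan is to reduce the statement to already-proved facts about $(\ZZ \wr \ZZ)^m$ via Lemma \ref{lem:ZZ}. By Definition \ref{def:bound}, membership in $\mathcal{D}_{\omega+1}$ amounts to the existence, for every $r > 0$, of a metric family $\mathcal{Y} \in \bigcup_{\beta<\omega+1}\mathcal{D}_\beta = \mathcal{D}_{\omega}$ such that $(\ZZ \wr \ZZ) \wr \ZZ \stackrel{r}{\rightarrow} \mathcal{Y}$. So I would fix $r > 0$ arbitrary and proceed to produce such a $\mathcal{Y}$.

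I would apply Lemma \ref{lem:ZZ} with $H = \ZZ \wr \ZZ$ to obtain an integer $m \in \NN$ and a metric family $\mathcal{Y}$ with $(\ZZ \wr \ZZ) \wr \ZZ \stackrel{r}{\rightarrow} \mathcal{Y}$, together with a coarse embedding from $\mathcal{Y}$ into the family of cosets $\{g(\ZZ \wr \ZZ)^m\}_{g \in \bigoplus (\ZZ \wr \ZZ)}$. Theorem \ref{thm:productwreath} supplies $(\ZZ \wr \ZZ)^m \in \mathcal{D}_\omega$, and then Lemma \ref{lem:inv}, applied in the left-invariantly metrized ambient group $\bigoplus(\ZZ \wr \ZZ)$, upgrades this to $\{g(\ZZ \wr \ZZ)^m\}_g \in \mathcal{D}_\omega$. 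Coarse invariance (Lemma \ref{lem:qisomecomplexity}) transports the conclusion back across the coarse embedding to give $\mathcal{Y} \in \mathcal{D}_\omega$. Since $r > 0$ was arbitrary, this yields $(\ZZ \wr \ZZ) \wr \ZZ \in \mathcal{D}_{\omega+1}$.

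The argument is essentially a bookkeeping exercise combining previously assembled tools, so no step is a genuine obstacle. The one mildly subtle point worth checking is that the product metric on $(\ZZ \wr \ZZ)^m$ used in Theorem \ref{thm:productwreath} and the metric it inherits as a subgroup of $\bigoplus(\ZZ \wr \ZZ) \subset (\ZZ \wr \ZZ) \wr \ZZ$ yield the same complexity class. This is handled by Lemma \ref{lem:groupequivalent}, which guarantees that any two proper left-invariant metrics on the finitely generated group $(\ZZ \wr \ZZ)^m$ are coarsely equivalent, combined once more with coarse invariance of complexity.
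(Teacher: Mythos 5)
Your proposal is correct and follows essentially the same route as the paper: invoke Lemma \ref{lem:ZZ} with $H=\ZZ\wr\ZZ$, use Theorem \ref{thm:productwreath} together with Lemma \ref{lem:inv} to get $\{gH^{m}\}_{g\in\bigoplus H}\in\mathcal{D}_{\omega}$, and transfer back through the coarse embedding via Lemma \ref{lem:qisomecomplexity}. Your remark about reconciling the product metric with the subspace metric via Lemma \ref{lem:groupequivalent} is a point the paper leaves implicit, but otherwise the two arguments coincide.
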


\begin{proof}
Let $H=\ZZ\wr\ZZ$, by Lemma \ref{lem:ZZ}, for every $ r\in\NN, \text{ there
exist } m\in\NN $ and a metric family $\mathcal{Y}$ such that
\begin{itemize}
 \item[(1)] $H \wr \ZZ\stackrel{r}{\rightarrow}\mathcal{Y}, $
 \item [(2)]
there is a coarse embedding from $\mathcal{Y}$ to $\{g H^{m}\}_{g
\in \bigoplus H}.$
\end{itemize}
By Theorem \ref{thm:productwreath}, $H^{m}\in\mathcal{D}_{\omega}$.
Then it is easy to obtain that $\{g H^{m}\}_{g
\in \bigoplus H}\in\mathcal{D}_{\omega}$. Since there is a coarse embedding from $\mathcal{Y}$ to $\{g H^{m}\}_{g
\in \bigoplus H}$, $\mathcal{Y}\in\mathcal{D}_{\omega}$ by Lemma \ref{lem:qisomecomplexity}.
Therefore, $(\ZZ \wr \ZZ)\wr\ZZ=H\wr\ZZ\in
\mathcal{D}_{\omega+1}$.

\end{proof}

{\bf Acknowledgments.}  We thank Jiawen Zhang for many useful discussions.

\end{section}

%\bibliographystyle{amsplain}
%\bibliography{wybib}

\providecommand{\bysame}{\leavevmode\hbox to3em{\hrulefill}\thinspace}
\providecommand{\MR}{\relax\ifhmode\unskip\space\fi MR }
% \MRhref is called by the amsart/book/proc definition of \MR.
\providecommand{\MRhref}[2]{%
  \href{http://www.ams.org/mathscinet-getitem?mr=#1}{#2}
}
\providecommand{\href}[2]{#2}

\end{document}